\numberwithin{equation}{section}
\theoremstyle{plain}
\newtheorem{theorem}{Theorem}[section]
\newtheorem{lemma}[theorem]{Lemma}
\theoremstyle{definition}
\newtheorem*{remarks}{Remarks}
\newcommand{\Psiba}{\ensuremath{\Psi_{\beta, \alpha}}}
\newcommand{\Deltaba}{\ensuremath{\Delta_{\beta, \alpha}}}
\newcommand{\Deltabar}{\ensuremath{\mathrel{\Delta_{\beta, \alpha}}}}
\acrodef{lics}[LICS]{Logic in Computer Science}
\acrodef{sat}[SAT]{satisfiability}
\acrodef{nae}[NAE]{not-all-equal}
\acrodef{ctb}[CTB]{cube term blocker}
\acrodef{tct}[TCT]{tame congruence theory}
\acrodef{wnu}[WNU]{weak near-unanimity}
\acrodef{CSP}[CSP]{constraint satisfaction problem}
\acrodef{MAS}[MAS]{minimal absorbing subuniverse}
\acrodef{MA}[MA]{minimal absorbing}
\acrodef{cib}[CIB]{commutative idempotent binar}
\acrodef{sd}[SD]{semidistributive}
\acrodef{NP}[NP]{nondeterministic polynomial time}
\acrodef{P}[P]{polynomial time}
\acrodef{PeqNP}[P $ = $ NP]{P is NP}
\acrodef{PneqNP}[P $ \neq $ NP]{P is not NP}
\begin{document}

%% \title[Computing the Commutator Efficiently]{Computing the commutator efficiently}
\title[Commutator as a Least Fixed Point]{%
  The Commutator as Least Fixed Point of a Closure Operator}
\date{7 March 2017}
\author[William DeMeo]{William DeMeo\\
7 March 2017}
\address{University of Hawaii, Honolulu 96822}
\address{e-mail: \href{mailto:williamdemeo@gmail.com}{williamdemeo@gmail.com}}

\begin{abstract}
  We present a description of the (non-modular) commutator, inspired by that of
  Kearnes in~\cite[p.~930]{MR1358491}, that provides a simple recipe for computing
  the commutator. 
\end{abstract}

\maketitle

\newcommand\mytimes{\ensuremath{\ast}}
\section{Preliminaries}
If $A$ and $B$ are sets and  $\alpha \subseteq A\times A$ and $\beta \subseteq B\times B$
are binary relations on $A$ and $B$, respectively, then 
we define
the \emph{pairwise product} of $\alpha$ and $\beta$ by
\begin{equation}
\label{eq:pair-product}
\alpha \mytimes \beta = \{((a, b), (a', b')) 
\in (A\times B)^2 \mid a\mathrel{\alpha} a'\, \text{ and } \,  b\mathrel{\beta} b'\},
\end{equation}
and we let $\alpha \times \beta$ denote the usual Cartesian product of sets; that is,
%% of the sets $\alpha$ and $\beta$, that is,
\begin{equation}
\label{eq:set-product}
\alpha \times \beta = \{((a, a'), (b, b')) 
\in A^2\times B^2 \mid a\mathrel{\alpha} a' \, \text{ and } \, b\mathrel{\beta} b'\}.
\end{equation}
The equivalence class of $\alpha \mytimes \beta$ containing the pair
$(a, b)$ is denoted and defined by % \in A\times B$ is
\[(a,b)/(\alpha \mytimes \beta) = a/\alpha \times b/\beta= 
    \{(a', b') \in A\times B \mid a\mathrel{\alpha} a' \, \text{ and } \,  b\mathrel{\beta} b'\},
    \]
the Cartesian product of the sets $a/\alpha$ and $b/\beta$.
The set of all equivalence classes of $\alpha \mytimes \beta$ is also a Cartesian product, namely,
$(A\times B)/(\alpha \mytimes \beta) =
A/\alpha \times B/\beta  = \{(a, b)/(\alpha \mytimes \beta) \mid a\in A \, \text{ and } \, b \in B\}$.

For an algebra $\bA$ with congruence relations $\alpha$, $\beta\in \Con\bA$,
let $\bbeta$ denote the subalgebra of $\bA\times \bA$ with universe 
$\beta$, and let $0_A$ denote the least equivalence relation on $A$.
Thus, $0_A = \{(a,a) \mid a\in A\} \leq \beta$.
Denote by $D_\alpha$ the following subset of $\beta \times \beta$:
\begin{equation}
  \label{eq:9009}
D_\alpha =(\alpha \mytimes \alpha) \cap (0_A \times 0_A)
= \{((a,a), (b,b)) \in (0_A \times 0_A) \mid a\alphar b\}.
\end{equation}
Let $\Delta_{\beta, \alpha} = \Cg^{\bbeta}(D_\alpha)$ denote the congruence relation
of $\bbeta$ generated by $D_\alpha$.
The condition $\CC{\alpha}{\beta}{\gamma}$
holds iff for all $a \alphar b$, for all $u_i \betar v_i$ ($1\leq i\leq n$), and for all 
$t\in \Pol_{n+1}(\bA)$, we have
$t(a,\bu) \mathrel{\gamma} t(a, \bv)$
iff $t(b,\bu) \mathrel{\gamma} t(b, \bv)$.
There are a number of different ways to define a commutator.
See, for example,~\cite{MR0432511,MR541622,MR590312,MR1145556,MR1663558,MR1257643}.
The present note concerns the commutator $[\alpha, \beta]$ defined to be the least
congruence $\gamma$ such that $\CC{\alpha}{\beta}{\gamma}$ holds.
\begin{comment}
Occasionally it is more convenient to write such an equivalence as a (two-way) derivation tree,
as follows:
\[
\infer=[\CC{\alpha}{\beta}{\gamma}]{\Gamma \vdash t(b,\bu) \mathrel{\gamma} t(b, \bv)}{\Gamma \vdash t(a,\bu) \mathrel{\gamma} t(a, \bv)}\]
where $\Gamma$ is a context containing
$a \alphar b$, $u_i \betar v_i$ ($1\leq i\leq n$), and 
$t\in \Pol_{n+1}(\bA)$.
\end{comment}

\section{Alternate Description of the Commutator}
\label{sec:altern-descr}

We now %% In~\cite{com-fix-poi} we gave 
describe an alternate way to express the commutator---specifically,
it is the least fixed point of a certain closure operator.
This description was inspired by the one that is mentioned in passing by
Keith Kearnes in~\cite[p.~930]{MR1358491}.  Our objective here is
to prove that the description we present is correct (i.e., describes the commutator)
and to show that it leads to a simple, efficient procedure for computing the commutator.

Let $\Tol(A)$ denote the collection of all tolerances (reflexive symmetric relations)
on the set $A$,\footnote{Actually, a
  \emph{tolerance} of an algebra $\bA = \<A, \dots\>$
  is a reflexive symmetric subalgebra of $\bA \times \bA$.
  Therefore, the set of all tolerances of $\bA$ forms an
  algebraic (hence complete) lattice.
  If we drop the operations and consider only the set $A$, then a tolerance relation on $A$
  is simply a reflexive symmetric binary relation.
}
and let
$\Psi_{\beta, \alpha} \colon \Tol(A) \to \Tol(A)$ be the function defined
for each $T \in  \Tol(A)$ follows:
\begin{equation}
  \label{eq:7}
  \Psi_{\beta, \alpha}(T)
  = \{ (x,y) \in A\times A \mid
  (\exists\, (a,b) \in T)\,
 (a,b) \mathrel{\Delta_{\beta, \alpha}} (x,y)\},
\end{equation}
where
$\Delta_{\beta, \alpha} = \Cg^{\bbeta}(D_\alpha)$ and
$D_\alpha =(\alpha \mytimes \alpha) \cap (0_A \times 0_A)$
(as in~(\ref{eq:9009})).

\pagebreak[2]
\begin{remarks}\
  \begin{enumerate}
\item
  It's easy to see that $\Psiba (T)$ is reflexive and symmetric
  whenever $T$ has these properties; similarly, $\Psiba (T)$ is compatible with the
  operations of $\bA$ whenever $T$ is. In other words $\Psiba$ maps tolerances of
  $A$ ($\bA$, resp.) to tolerances of $A$ ($\bA$, resp.).
  \item 
  Since $\Psiba$ is clearly a monotone increasing function on the complete
  lattice $\Tol(A)$, it is guaranteed to have a least fixed
  point---that is, there is a point $\tau\in \Tol(A)$ such that $\Psiba(\tau) = \tau$
  and $\tau \leq T$, for every $T \in \Tol(A)$
  satisfying $\Psiba(T) = T$.
\item
  Here are two ways the least fixed point of $\Psiba$ could be computed:
  \begin{equation}
    \label{eq:4}
  \tau = \Meet \{ T \in \Tol(A) \mid \Psiba(T) \leq T\}
  \quad \text{ and } \quad
     \tau = \Join_{k\geq 0} \Psiba^{k}(0_A).
  \end{equation}
  In Lemma~\ref{lem:fixed-point-comm} we will show that the least
  fixed point of $\Psiba$ is, in fact, the commutator,
  $\tau = [\alpha, \beta]$, so either
  expression in~(\ref{eq:4}) could potentially be used to compute it.
  %% For example,
  %% an algorithm might be based on the following formula:
  %% \begin{equation}
  %%   \label{eq:5}
  %%         [\alpha, \beta] = \Join_{k\geq 0} \Psiba^{k}(0_A).
  %% \end{equation}
  However, Lemma~\ref{lem:fixed-point-comm} also shows that 
  $\Psiba$ is a closure operator; in particular, it is idempotent. Therefore,
  $\Psiba^{k}(0_A) = \Psiba(0_A)$ for all $k$, so we have 
  the following simple description of the commutator:
  \begin{align*}
    %% \label{eq:55}
          [\alpha, \beta] =
          \Psiba(0_A)
          &= \{ (x,y) \in A\times A \mid
          (\exists\, (a,b) \in 0_A)\, (a,b) \mathrel{\Delta_{\beta, \alpha}} (x,y)\}\\
          &= \{ (x,y) \in A\times A \mid
          (\exists a \in A)\, (a,a) \mathrel{\Delta_{\beta, \alpha}} (x,y)\}.
  \end{align*}

  %(See, for example,~\cite{MR3012378}.)

  %% Recall, if $f$ is a monotone increasing function defined on a
  %%   complete poset $\<P, \leq\>$, then the least fixed point of $f$
  %%   is $\Meet \{ p\in P \mid f p \leq p\}$. %(See, for example,~\cite{MR3012378}.)
  %%   Thus,
  %%   Lemma~\ref{lem:fixed-point-comm}~(\ref{item:2}) asserts that
  %%   \begin{equation}
  %%     \label{eq:2}
  %%           [\alpha, \beta] =\Meet \{ B \subseteq \beta \mid \Psiba(B) \subseteq B\}.
  %%   \end{equation}
  \end{enumerate}
\end{remarks}

\subsection{Fixed Point Lemma}
\begin{lemma}
  \label{lem:fixed-point-comm}
  If $\alpha$, $\beta\in \Con(\bA)$ and 
  if $\Psi_{\beta, \alpha}$ is defined by~(\ref{eq:7}), then 
  \begin{enumerate}[(i)]
  \item \label{item:1} $\Psiba$ is a closure operator on $\Tol(A)$;
  \item \label{item:2} $[\alpha, \beta]$ is the least fixed point of $\Psiba$.
  \end{enumerate}
\end{lemma}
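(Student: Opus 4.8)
The plan is to establish part (i) first and then deduce part (ii) from it together with the term-condition characterisation of $[\alpha,\beta]$. Throughout I will lean on two elementary facts about $\Deltaba=\Cg^{\bbeta}(D_\alpha)$. First, since $D_\alpha\subseteq\Deltaba$, any two diagonal elements $(a,a),(b,b)$ of $\bbeta$ with $a\alphar b$ are $\Deltaba$-related. Second, since $\Deltaba$ is a congruence of $\bbeta\le\bA\times\bA$, applying a unary polynomial of $\bbeta$ — which acts coordinatewise as a pair $(s\mapsto t(s,\bu),\,s\mapsto t(s,\bv))$ for parameters $u_i\betar v_i$ — to such a generator shows that $(t(a,\bu),t(a,\bv))\Deltabar(t(b,\bu),t(b,\bv))$ whenever $a\alphar b$. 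In other words, $\Deltaba$ records exactly the rows of the $(\alpha,\beta)$-matrices, and it is reflexive, symmetric and transitive on $\beta$.

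For part (i), monotonicity and the fact that $\Psiba$ sends tolerances to tolerances are already recorded in the Remarks, so only extensivity and idempotence remain. Extensivity is immediate from reflexivity of $\Deltaba$ on $\beta$: if $(x,y)\in T$ and $(x,y)\in\beta$, then $(x,y)\Deltabar(x,y)$ witnesses $(x,y)\in\Psiba(T)$ (note the image of $\Psiba$ always lies below $\beta$, so the relevant lattice is the interval of tolerances $\le\beta$, which is exactly where the iteration from $0_A$ takes place). For idempotence, the inclusion $\Psiba(T)\le\Psiba(\Psiba(T))$ is just extensivity applied to $\Psiba(T)$, while $\Psiba(\Psiba(T))\le\Psiba(T)$ follows from transitivity of $\Deltaba$: a witnessing chain $(a,b)\in T$, $(a,b)\Deltabar(c,d)$, $(c,d)\Deltabar(x,y)$ collapses to $(a,b)\Deltabar(x,y)$.

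Given part (i), idempotence forces the least fixed point to be $\tau:=\Psiba(0_A)$, and the first key fact lets me rewrite it as $\tau=\{(x,y):(x,x)\Deltabar(x,y)\}$ (if $(a,a)\Deltabar(x,y)$ then $a\alphar x$, so $(a,a)\Deltabar(x,x)$ and hence $(x,x)\Deltabar(x,y)$). I will prove $\tau=[\alpha,\beta]$ by two inclusions. For $\tau\le\gamma$ whenever $\CC{\alpha}{\beta}{\gamma}$ holds: a $\Deltaba$-derivation of $(x,x)\Deltabar(x,y)$ is a Mal'cev chain of polynomial images of diagonal generators, and each single step replaces a row $(t(a,\bu),t(a,\bv))$ by $(t(b,\bu),t(b,\bv))$ with $a\alphar b$; the term condition says precisely that the predicate ``the two coordinates are $\gamma$-related'' is unchanged by such a step. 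Since that predicate is true at the start $(x,x)$, it is true at the end $(x,y)$, giving $x\mathrel\gamma y$. Taking $\gamma=[\alpha,\beta]$ yields $\tau\le[\alpha,\beta]$. The same bookkeeping, run on the generators $(t(a,\bu),t(a,\bv))$ for which $t(b,\bu)=t(b,\bv)$, shows conversely that every standard generator of $[\alpha,\beta]$ lies in $\tau$, and a short computation with the two key facts plus transitivity of $\Deltaba$ shows that $\CC{\alpha}{\beta}{\tau}$ holds.

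The remaining, and I expect hardest, point is that $\tau$ is \emph{transitive}, i.e. a congruence: only then does the minimality of $[\alpha,\beta]$ among congruences satisfying the term condition turn ``$\tau$ contains the generators and satisfies $\CC{\alpha}{\beta}{\tau}$'' into $[\alpha,\beta]\le\tau$. Reflexivity, symmetry and compatibility of $\tau$ come for free from the Remarks, but transitivity is not formal: given $(x,y),(y,z)\in\tau$ one easily gets $(x,y)\Deltabar(y,z)$, yet producing $(x,x)\Deltabar(x,z)$ amounts to transporting the relation $(y,y)\Deltabar(y,z)$ from the fibre over $y$ to the fibre over $x$, and no single polynomial of $\bbeta$ effects this change of first coordinate. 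The plan is to fix this by working with the equivalence $\theta_x=\{(s,t):(x,s)\Deltabar(x,t)\}$ that $\Deltaba$ induces on each block $x/\beta$ and proving $\theta_x=\theta_y$ whenever $(x,y)\in\tau$; transitivity then follows, since $x\mathrel{\theta_x}y$, $y\mathrel{\theta_y}z$ and $\theta_x=\theta_y$ give $x\mathrel{\theta_x}z$. Establishing $\theta_x=\theta_y$ requires unwinding the chain description of $\Deltaba$ rather than a one-step argument, and this is where the real work of the lemma lies.
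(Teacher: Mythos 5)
Your part (i) and the inclusion $\tau := \Psiba(0_A) \leq [\alpha,\beta]$ are correct and coincide with the paper's own argument: (i) is the paper's (c.1)--(c.3) (and your restriction to tolerances below $\beta$ repairs a genuine, if minor, slip in the paper --- for $T \not\leq \beta$ one has $\Psiba(T) \subseteq \beta$, so extensivity fails and $\Psiba$ is a closure operator only on the interval $[0_A, \beta]$ of $\Tol(A)$); your chain argument for $\tau \leq \gamma$ whenever $\CC{\alpha}{\beta}{\gamma}$ holds is the paper's second inclusion; and your observations that the standard generators of $[\alpha,\beta]$ lie in $\tau$ and that $\CC{\alpha}{\beta}{\tau}$ holds are the content of the paper's first inclusion. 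The gap is exactly the step you flag and postpone: transitivity of $\tau$. You are right that it is indispensable (minimality of $[\alpha,\beta]$ among \emph{congruences} satisfying the term condition says nothing about a mere tolerance satisfying it), and right that it follows from nothing established so far; but your proposal never carries out the promised proof that $\theta_x = \theta_y$ for $(x,y) \in \tau$, so it is a plan, not a proof. For comparison, the paper silently commits the very error you are trying to avoid: its sentence ``it suffices to prove $\CC{\alpha}{\beta}{\Psiba(0_A)}$ holds'' is justified only if $\Psiba(0_A)$ is already known to be a congruence, and transitivity is proved nowhere in the paper.

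Unfortunately no argument can close this gap, because transitivity of $\tau$ --- and with it part (ii) of the Lemma --- is false. Let $\bA = \langle \{a,b,0\}, \wedge\rangle$ be the meet-semilattice with $a,b$ incomparable and $a \wedge b = 0$, and let $\alpha = \beta = 1_A$. Every unary polynomial of $\bbeta = \bA \times \bA$ has the form $W \mapsto W \wedge E$, $W \mapsto E$, or $W \mapsto W$, and $(c,c)\wedge(e_1,e_2) = (a,b)$ would force $c \wedge e_1 = a$ and $c \wedge e_2 = b$, hence $c = a$ and $c = b$; so $(a,b)$ occurs in no nontrivial link of any Mal'cev chain for $\Deltaba$, whence $(a,b)/\Deltaba = \{(a,b)\}$ and $(a,b) \notin \tau$. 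On the other hand, applying $W \mapsto W \wedge (a,b)$ to the generator $((a,a),(0,0)) \in D_\alpha$ gives $(a,0) \Deltabar (0,0)$, so $(a,0) \in \tau$, and symmetrically $(0,b) \in \tau$: thus $\tau$ is not transitive, and indeed $\theta_a$ collapses only $\{a,0\}$ while $\theta_0 = 1_A$, so the key claim of your plan fails here. Meanwhile the term condition applied to the polynomial $r \wedge w$ with $\alpha$-pair $(0,a)$ and $\beta$-pair $(a,b)$ reads $0\wedge a \mathrel{\gamma} 0 \wedge b$ iff $a \wedge a \mathrel{\gamma} a\wedge b$, forcing $(a,0) \in \gamma$, and symmetrically $(b,0)\in\gamma$; hence $[1_A,1_A] = 1_A$, strictly larger than the least fixed point $\tau$ of $\Psiba$. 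What your two correctly proven halves actually show is that $\tau$ is the least \emph{tolerance} satisfying the term condition and that $\Cg(\tau) \leq [\alpha,\beta]$; the Lemma fails because the least tolerance satisfying $\CC{\alpha}{\beta}{\cdot}$ can be strictly smaller than the least congruence satisfying it, so any correct restatement must insert a transitive closure (in the example above, $[\alpha,\beta]$ is precisely the transitive closure of $\tau$).
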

\begin{proof}\
  \begin{enumerate}[(i)]
  \item 
  %% In fact, $\Psi_{\beta, \alpha}$ is a closure operator on all
  %% of $\Tol(A)$ as we 
    To prove (i) we verify that
    $\Psi_{\beta, \alpha}$ has the three properties that define a closure
    operator---namely for all $T$, $T' \in \Tol(A)$,
  \begin{enumerate}[(c.1)]
  \item \label{item:c1} $T  \leq \Psiba(T )$;     
  \item \label{item:c2} $T  \leq T'  \Rightarrow \Psiba(T) \leq \Psiba(T')$;    
  \item \label{item:c3} $\Psiba(\Psiba(T))  = \Psiba(T)$. 
  \end{enumerate}

  \vskip2mm

  \noindent {\it Proof of (c.1):} $(a,b) \in T $
  implies $(a,b) \in \Psiba(T )$ because $(a,b)\mathrel{\Delta_{\beta, \alpha}} (a,b)$.\\[4pt]
  %% this proves~(c.\ref{item:c1}).
  \noindent {\it Proof of (c.2):} $(x,y) \in \Psiba(T )$ iff there exists
  $(a,b) \in T  \leq T'$ such that
  $(a,b) \mathrel{\Delta_{\beta, \alpha}} (x,y)$; this and $(a,b) \in T'$ implies
  $(x,y) \in \Psiba(T')$.\\[4pt]
  \noindent {\it Proof of (c.3):} $(x,y) \in \Psiba(\Psiba(T))$ if and only if
  there exists $(a,b) \in \Psiba(T)$ such that
  $(a,b) \Deltabar (x,y)$, and $(a,b) \in \Psiba(T)$ is in turn equivalent to 
  the existence of $(c,d) \in T $ such that
  $(c,d) \Deltabar (a,b)$. By transitivity of $\Deltaba$, we have that
  $(c,d) \Deltabar (a,b) \Deltabar (x,y)$ implies
  $(c,d) \Deltabar (x,y)$, proving that there exists $(c,d) \in T $ such that
  $(c,d) \Deltabar (x,y)$; equivalently, $(x,y) \in T $.

  \medskip

\item
  %% \noindent (ii) 
  As remarked above, from part (i) follows 
  $\Psiba^{k}(0_A) = \Psiba(0_A)$ for all $k$, so the least fixed point of
  $\Psiba$ that appears in the formula on the right in~(\ref{eq:4}) reduces
  to $\tau = \Psiba(0_A)$.  Therefore, to complete the proof it suffices to show
  $[\alpha, \beta] = \Psiba(0_A)$.
  %% An alternative direct proof using
  %% \malcev's congruence generation theorem appears in the appendix Section
  %% below.
  %% [\alpha, \beta] = \Join_{k\geq 0} \Psiba^{k}(0_A).

  We first prove $[\alpha, \beta]\leq \Psiba(0_A)$.
  Since $[\alpha, \beta]$ is the least congruence $\gamma$
  satisfying $\CC{\alpha}{\beta}{\gamma}$, it suffices to prove
    $\CC{\alpha}{\beta}{\Psiba(0_A)}$ holds.
    Suppose $a \alphar a'$ and $b_i \betar b_i'$ %% ($1\leq i \leq k$)
    and $t^{\bA} \in \Pol_{k+1}(\bA)$ satisfy
    $t^{\bA}(a, \bb) \mathrel{\Psiba(0_A)} t^{\bA}(a, \bb')$,
    where $\bb = (b_1, \dots, b_k)$ and $\bb' = (b_1', \dots, b_k')$.
    We must show $t(a', \bb) \mathrel{\Psiba(0_A)} t(a', \bb')$.  
    By definition of $\Psiba$,
    the antecedent $t^{\bA}(a, \bb) \mathrel{\Psiba(0_A)} t^{\bA}(a, \bb')$ is equivalent to    
    the existence of $c \in A$ such that $(c,c) \Deltabar (t^{\bA}(a, \bb), t^{\bA}(a, \bb'))$.
    Now
    \[
    (t^{\bA}(a, \bb), t^{\bA}(a, \bb')) = t^{\bbeta}((a,a),(b_1, b_1'), \dots,(b_k, b_k')),
    \]
    and since $a \alphar a'$, we have
    \[
    t^{\bbeta}((a,a),(b_1, b_1'), \dots,(b_k, b_k'))
    \Deltabar
    t^{\bbeta}((a',a'),(b_1, b_1'), \dots,(b_k, b_k')).
    \]
    The latter is equal to $(t^{\bA}(a', \bb), t^{\bA}(a', \bb'))$, and  it follows
    by transitivity of $\Deltaba$ that
    $(c,c) \Deltabar (t^{\bA}(a', \bb), t^{\bA}(a', \bb'))$.
    Therefore, $t(a', \bb) \mathrel{\Psiba(0_A)} t(a', \bb')$, as desired.

  We now prove $\Psiba(0_A)\leq   [\alpha, \beta]$.
  %% \begin{equation}
  %%   \label{eq:8}
  %% \Join_{k\geq 0} \Psiba^{k}(0_A)\leq   [\alpha, \beta].
  %% \end{equation}
  If $(x,y)\in \Psiba(0_A)$ then there exists $a \in A$ such that 
  \begin{equation}
    \label{eq:1100}
    (a,a) \mathrel{\Delta_{\beta, \alpha}} (x,y).
  \end{equation}
  From the definition of $\Delta_{\beta, \alpha}$ and 
  \malcev's congruence generation theorem,~(\ref{eq:1100})
  holds if and only if for there exist
  $(z_i, z_i') \in \beta$ ($0\leq i \leq n+1$), and $(u_i, v_i) \in \alpha$,
  $f_i \in \Pol_1(\bbeta)$ ($0\leq i \leq n$), such that
  $(a, a) = (z_0,z_0')$ and $(x, y)=(z_{n+1},z'_{n+1})$ hold, and so do the
  following equations of sets: 
  \begin{align}
    \label{eq:001}
    \{(a, a),(z_1,z_1')\} &= \{f_0(u_0,u_0), f_0(v_0,v_0)\},\\
    \label{eq:011}
    \{(z_1,z_1'),(z_2,z_2')\} &= \{f_1(u_1,u_1), f_1(v_1,v_1)\},\\
    \nonumber
    &\; \; \vdots\\
    %% \label{eq:n-1}
    \nonumber
    %% \{(z_{n-1},z_{n-1}'),(x, y)\} &= \{f_{n-1}(u_{n-1},u_{n-1}), f_{n-1}(v_{n-1},v_{n-1})\}.
    \{(z_{n},z_{n}'),(x, y)\} &= \{f_{n}(u_{n},u_{n}), f_{n}(v_{n},v_{n})\}.
  \end{align}
  Now $f_i \in \Pol_1(\bbeta)$ for all $i$, so
  \newcommand\gA{\ensuremath{g^{\bA}}}%
  \[
  f_i(c, c') = g_i^{\bbeta}((c, c'), (b_1, b_1'), \dots, (b_k, b_k') )
  = (\gA_i(c, \bb), \gA_i(c', \bb')),%
  \]
  \renewcommand\gA{\ensuremath{g}}%
  for some $k$, some $(k+1)$-ary term $\gA_i$, and some constants
  $\bb = (b_1, \dots, b_k)$ and $\bb' = (b_1', \dots, b_k')$ satisfying
  $b_i \betar b_i'$ ($1\leq i\leq k$). 
  By~(\ref{eq:001}), either
  \[
  (a, a) = \bigl(\gA_0(u_0, \bb), \gA_0(u_0, \bb')\bigr)
  \quad \text{ and } \quad 
  (z_1,z_1')= \bigl(\gA_0(v_0, \bb), \gA_0(v_0, \bb')\bigr),
  \]
  or vice-versa. %Of course $(a, a) \in \comm{\alpha}{\beta}$,
  We assumed $u_0 \alphar v_0$ and $b_i \betar b_i'$ ($1\leq i\leq k$),
  so the $\alpha,\beta$-term condition entails
  $\gA_0(u_0, \ba) \commr{\alpha}{\beta} \gA_0(u_0, \ba')$
  iff 
  $\gA_0(v_0, \ba) \commr{\alpha}{\beta} \gA_0(v_0, \ba')$.
  %% \[
  %%   \gA_0(u_0, \bb) \commr{\alpha}{\beta} \gA_0(u_0, \bb')
  %%   \quad \Longleftrightarrow \quad 
  %%   \gA_0(v_0, \bb) \commr{\alpha}{\beta} \gA_0(v_0, \bb').
  %%   \]
  From this and~(\ref{eq:001}) we deduce that 
  $(a,a)\in [\alpha, \beta]$ iff $(z_1,z_1')\in [\alpha, \beta]$.
  Similarly~(\ref{eq:011}) and $u_1 \alphar v_1$ imply
  $(z_1,z_1')\in [\alpha, \beta]$ iff
  $(z_2,z_2')\in [\alpha, \beta]$.  Inductively, and by transitivity of
  $[\alpha, \beta]$, we conclude $(a,a)\in [\alpha, \beta]$ iff
  $(x,y)\in [\alpha, \beta]$.
  Since $(a,a)\in [\alpha, \beta]$, we have $(x,y)\in [\alpha, \beta]$, as desired.

  \end{enumerate}
\end{proof}

\section{Computing the Commutator}
As a consequence of the description of the commutator given in the last section,
we now have the following simple method for computing it.

\smallskip

\noindent {\bf Input} \hskip2mm A finite algebra, $\bA = \<A, \dots\>$, and two congruence relations $\alpha$, $\beta \in \Con \bA$.

\smallskip
\noindent {\bf Procedure}
\begin{itemize}
\item {\bf Step 1} \hskip2mm Compute the congruence relation
  $\Deltaba = \Cg^{\bbeta}\bigl\{((a,a), (b,b)) \mid a \alphar b \bigr\}$.
\item {\bf Step 2} \hskip2mm Compute the commutator
  %% \begin{align*}
  %% [\alpha, \beta] &= \Psiba(0_A)
  %% = \{(x,y) \in A\times A \mid \bigl(\exists a \in A\bigr) \, (a,a) \Deltabar (x,y)\}\\
  %% &= \bigcup_{a\in A} (a,a)/\Deltaba
  %% \end{align*}
  \[[\alpha, \beta] 
  = \bigl\{(x,y) \in A\times A \mid (\exists a \in A) \, (a,a) \Deltabar (x,y)\bigr\} 
  =\bigcup_{a\in A} (a,a)/\Deltaba
    \]
\end{itemize}
Note that $\Deltaba$ is a subalgebra of $\bA^2 \times \bA^2$ and such a congruence
can be computed in polynomial-time in the size of $\bA$. (See~\cite{MR2470585}.)

\bibliographystyle{alphaurl}
\bibliography{refs}

\end{document}